\documentclass[12pt]{amsart}
\usepackage{amsfonts,amsthm,latexsym,amsmath,amssymb,amscd,amsmath, mathrsfs, url, cite, graphicx }
\usepackage[pdftex]{color}
\usepackage[bookmarks=true,hyperindex,pdftex,colorlinks, citecolor=blue,linkcolor=blue, urlcolor=blue]{hyperref}
 
\newcounter{count}
\numberwithin{count}{section}
\newtheorem{Lemma}[count]{Lemma}
\newtheorem{Remark}[count]{Remark}
\newtheorem{Example}[count]{Example}
\newtheorem{Definition}[count]{Definition}
\newtheorem{theorem}[count]{Theorem}
\newtheorem{Corollary}[count]{Corollary}
\newtheorem{Theorem}[count]{Theorem}
\newtheorem{Conjecture}[count]{Conjecture}
\newtheorem{Problem}[count]{Open Problem}
\newtheorem{Statement}[count]{Statement}

\begin{document}

\author[A.~Vishnyakova]{Anna Vishnyakova}

\address{Department of Mathematics, Holon Institute of Technology,
Israel}
\email{annalyticity@gmail.com}

\title[Polynomials interpolated totally positive sequences]
{Polynomials interpolated totally positive sequences}
\maketitle

\begin{abstract}
A real sequence  $(a_k)_{k=0}^\infty$ is called {\it totally positive}  
if all minors of the infinite Toeplitz matrix $ \left\| a_{j-i}  
\right\|_{i, j =0}^\infty$ are nonnegative (where $a_k=0$
for $k<0$). In this paper, we investigate the following question:
for which real polynomials $P$ the sequence $(P(k))_{k=0}^\infty$
is totally positive? We establish a few new necessary conditions, 
sufficient conditions, present a number of  important examples and formulate
several open problems.

\end{abstract}

\keywords {Totally positive sequences; 
 real-rooted polynomials; 
 Laguerre-P\'olya class; orthogonal polynomials}

\subjclass{30C15; 15B48; 30D15;  26C10; 30D99}

\section{Introduction}

\begin{Definition}
 A sequence of nonnegative numbers $(a_k)_{k=0}^\infty$
is called  totally positive, if all minors of the
infinite matrix

\begin {equation}
\label{mat}
 \left(
  \begin{array}{ccccc}
   a_0 & a_1 & a_2 & a_3 &\ldots \\
   0   & a_0 & a_1 & a_2 &\ldots \\
   0   &  0  & a_0 & a_1 &\ldots \\
   0   &  0  &  0  & a_0 &\ldots \\
   \vdots&\vdots&\vdots&\vdots&\ddots
  \end{array}
 \right)
\end {equation}
 are non-negative. 
\end{Definition}

The class of totally positive sequences is denoted by $\mathrm{TP}.$
The class of generating functions of totally positive sequences
$f(x)=\sum_{k=0}^\infty a_k x^k$
is  denoted by  $\widetilde{\mathrm{TP}}.$

Concept of total positivity found numerous applications and was
studied from many different sides (see, for example, T.~Ando, \cite{ando}, 
S.~Karlin,  \cite{tp},  or A.~Pinkus, \cite{pin}).  It has applications  
in distribution of zeros of polynomials and entire functions,  P\'olya 
frequency sequences, unimodality and log-concavity,  stochastic 
processes and approximation theory, mechanical systems,  planar  
networks, combinatorics and representation theory.

Totally  positive  sequences were introduced
by M.~Fekete in 1912 (see~\cite{fek}) in connection with the problem 
of determining the exact number of positive zeros of a real polynomial. 
$\mathrm{TP}$  sequences arise in many areas of mathematics and its
applications.

Note that checking that all minors of an infinite matrix are non-negative is 
often a very difficult task. The work \cite{kv} contains an easily verifiable 
sufficient condition.

The class $\widetilde{\mathrm{TP}}$ was completely described in the
classical theorem by M.~Aissen, A.~Edrei, I.J.~Schoenberg, A.~Whitney.

{\bf  Theorem A} (M.~Aissen, A.~Edrei, I.J.~Schoenberg, A.~Whitney, \cite{aissen}).
{ \it  Let $(a_k)_{k=0}^\infty$  be a given sequence of nonnegative numbers.  
Then $f(z)=\sum_{k=0}^\infty a_kz^k \in \widetilde{\mathrm{TP}}$ if and
only if
$$f(z)=C z^n e^{\gamma z}\prod_{k=1}^\infty (1+\alpha_kz)/(1-\beta_kz),$$
where $C\ge 0, n\in \mathbb{ N}\cup\{0\},
\gamma\ge0,\alpha_k\ge0,\beta_k\ge0,\sum(\alpha_k+\beta_k)
<\infty.$}

The following fact is a simple corollary of Theorem A.

\begin{Corollary}  A polynomial with nonnegative coefficients 
$P(x) =\sum _{k=0}^n a_k x^k $   has only real zeros if and only 
if the sequence of its coefficients is totally positive: $(a_0,
a_1, \ldots , a_n, 0, 0, \ldots ) \in \mathrm{TP}.$
\end{Corollary}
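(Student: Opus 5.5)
Both directions of the Corollary follow from Theorem A applied to the generating function $P$ itself, the sequence $(a_0,\dots,a_n,0,0,\dots)$. We may assume $P\not\equiv 0$, since the zero sequence is trivially totally positive and the zero polynomial has no zeros to speak of.

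\emph{Necessity.} Suppose $(a_0,\dots,a_n,0,\dots)\in\mathrm{TP}$. Theorem A then gives
$$P(z)=Cz^m e^{\gamma z}\prod_k (1+\alpha_k z)/(1-\beta_k z).$$
We compare the analytic behaviour of the two sides.

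\begin{itemize}
\item Since $P$ is entire, no factor $(1-\beta_k z)^{-1}$ with $\beta_k>0$ can survive. Each such factor would give a pole at $1/\beta_k$, and this pole cannot be cancelled by a zero of the numerator, because the zeros $-1/\alpha_k$ of the numerator are negative while the pole $1/\beta_k$ is positive. Hence all $\beta_k=0$.
\item Since $P$ has polynomial growth, we must have $\gamma=0$. Indeed, the product $\prod(1+\alpha_k z)$ has genus zero, so it has order at most $1$ and minimal type. Multiplying it by $e^{\gamma z}$ with $\gamma>0$ would produce exponential growth along the positive real axis, which a polynomial cannot have.
\item A polynomial has finitely many zeros, so only finitely many $\alpha_k$ are nonzero.
\end{itemize}

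Therefore $P(z)=Cz^m\prod_{k=1}^{N}(1+\alpha_k z)$, which has only real zeros: $0$ and the points $-1/\alpha_k$.

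\emph{Sufficiency.} Suppose $P$ has nonnegative coefficients and only real zeros. Since the coefficients are nonnegative, $P(x)>0$ for every $x>0$, so all zeros lie in $(-\infty,0]$. We can therefore write
$$P(z)=a_n z^m\prod_{k}(z+r_k)\quad\text{with } r_k>0,$$
which rearranges to
$$P(z)=Cz^m\prod_k(1+\alpha_k z),\qquad C=a_n\prod_k r_k>0,\quad \alpha_k=1/r_k>0.$$
This has exactly the form in Theorem A with $\gamma=0$, all $\beta_k=0$, and a finite sum $\sum\alpha_k$. Hence $P\in\widetilde{\mathrm{TP}}$, i.e. its coefficient sequence is totally positive.

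The only step that needs genuine care is the exclusion of $e^{\gamma z}$ and of the denominators in the necessity direction. For the exponential, restrict to real $x\to+\infty$: we have $\prod_k(1+\alpha_k x)\ge 1$ and each $(1-\beta_k x)^{-1}$ factor has already been removed. If $\gamma>0$, then $|P(x)|\ge Cx^m e^{\gamma x}$, which contradicts polynomial growth.
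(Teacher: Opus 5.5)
Your proof is correct and is the direct application of Theorem~A in both directions that the paper intends; the paper gives no further detail and simply calls the result a simple corollary. In the direction $\mathrm{TP}\Rightarrow$ real zeros you do more than necessary. Ruling out $\gamma$ and the $\beta_k$ is not needed: $e^{\gamma z}$ never vanishes, and the points $1/\beta_k$ are real, so every zero of $P$ is already real.
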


The main problem we are interested in is the following open problem.
\begin{Problem}
\label{pr1}
To describe the set of real polynomials $P\in \mathbb{R}[x]$ such that
the sequence $(P(k))_{k=0}^\infty$ is totally positive.
\end{Problem}

We consider the following identity
$$ \sum_{k=0}^\infty x^k = \frac{1}{1-x}.  $$ 
For $m\in \mathbb{N}\cup \{0\}$  let's differentiate this formula 
$m$ times and multiply by $\frac{x^m}{m!}.$ We obtain
\begin{equation}
\label{f1}
\sum_{k=0}^\infty \frac{k(k-1)\cdot \ldots \cdot (k-m+1)}{m!} x^k
=\frac{x^m}{(1-x)^{m+1}}.
\end{equation}

Let $P\in \mathbb{R}[x]$ be a real polynomial of degree $n\in \mathbb{N}.$ 
Let us decompose this polynomial in the form
\begin{eqnarray}
\label{f2}
&
P(x)= a_0 + a_1 x +a_2 \frac{x(x-1)}{2!} +a_3 \frac{x(x-1)(x-2)}{3!}\\
\nonumber &
+  \ldots +a_n \frac{x(x-1)\cdot \ldots \cdot (x-n+1)}{n!}.
\end{eqnarray}
From (\ref{f1}) we get
\begin{eqnarray}
\label{f3}
&
\sum_{k=0}^\infty P(k) x^k= \frac{a_0}{1-x} +  \frac{a_1 x}{(1-x)^2} 
+ \frac{a_2 x^2}{(1-x)^3} + \ldots + \frac{a_n x^n}{(1-x)^{n+1}}\\
\nonumber &
=\frac{1}{1-x} \left(a_0 + a_1 \cdot \frac{x}{1-x} + a_2 \cdot \frac{x^2}{(1-x)^2} +
\ldots +  a_n \cdot \frac{x^n}{(1-x)^n} \right) \\
\nonumber &
=\frac{1}{(1-x)^{n+1}} \left(a_0 (1-x)^n + a_1 x(1-x)^{n-1} + a_2 x^2(1-x)^{n-2} +
\ldots +  a_nx^n\right).
\end{eqnarray}
Using Theorem A, we conclude that $(P(k))_{k=0}^\infty \in \mathrm{TP}$ if and only if all zeros of the
polynomial 
$$S(x)=a_0 (1-x)^n + a_1 x(1-x)^{n-1} + a_2 x^2(1-x)^{n-2} +
\ldots +  a_n x^n$$ 
are real and non-positive and  all coefficients are non-negative.

Denote by 
$$Q(x) =a_0 +a_1 x +a_2 x^2 + \ldots + a_n x^n.$$
We have
\begin{equation}
\label{ff1}
S(x) =(1-x)^n Q\left(\frac{x}{1-x}\right).
\end{equation}
Thus, all zeros of the polynomial $S$ are real and non-positive if and only if all zeros of the 
polynomial $Q$ are real and belong to the segment $[-1, 0]$ (we note that the possible zero of $Q$ 
at the point $-1$ does not correspond to any zero of $S$).
In connection with this observation we define the following linear operator.

\begin{Definition} By $\mathcal{L}$ we denote the linear operator  $\mathcal{L} : \mathbb{R}[x] \rightarrow 
\mathbb{R}[x]$ such that
\begin{eqnarray}
\label{f4}
&  \mathcal{L}(a_0 + a_1 x +a_2 \frac{x(x-1)}{2!} 
+  \ldots +a_n \frac{x(x-1)\cdot \ldots \cdot (x-n+1)}{n!})
\\  \nonumber &
= a_0 +a_1 x +a_2 x^2 + \ldots + a_n x^n.
\end{eqnarray}
\end{Definition}
As we have shown, the Open problem~\ref{pr1}  is equivalent to the following Open problem.
\begin{Problem}
\label{pr2}
To describe the set of real polynomials $P\in \mathbb{R}[x]$ with positive leading
coefficients such that  the polynomials $\mathcal{L}(P)$ have only real zeros located 
in the segment $[-1, 0].$
\end{Problem}

\begin{Remark}
\label{r1}
Suppose that a polynomial 
$$P(x)= a_0 + a_1 x +a_2 \frac{x(x-1)}{2!} 
+  \ldots +a_n \frac{x(x-1)\cdot \ldots \cdot (x-n+1)}{n!}$$
is such that
$$\mathcal{L}(P)(x)
= a_0 +a_1 x +a_2 x^2 + \ldots + a_n x^n$$ 
has only real zeros located in the segment $[-1, 0].$ Then all
coefficients of $\mathcal{L}(P)$ are of the same signs. If $a_n >0,$
then there exists $l=0, 1, 2, \ldots, n$ such that $a_j=0$ for all
$j < l,$ and $a_j >0$ for $j=l, l+1, \ldots, n.$

\end{Remark}

The following theorem by F.~Brenti gives the simple sufficient condition for a polynomial
to interpolate a $\mathrm{TP}$-sequence.

{\bf Theorem B} (F.\,Brenti, \cite{BrentiMemoir}, see also \cite{BrentiContMath}, c.f. \cite[Lemma 1]{kav}). {\it  Let 
$P\in \mathbb{R}[x]$ be a polynomial with only real zeros, and let $\lambda(P), \Lambda(P)$ 
be the smallest and the largest zeros of $P.$ Suppose that $P(x)=0$ for all $x\in \left(\left[\lambda(P), -1\right] \cup \left[0, \Lambda(P)\right]\right) \cap \mathbb{Z}.$  Then the polynomial $\mathcal{L}(P)$ has only real zeros 
located in the segment $[-1, 0].$ }

Later we will study how close the sufficient conditions in the previous Theorem are to the necessary ones.

Next Theorem by D.G.~Wagner shows that the set of polynomials that interpolate $\mathrm{TP}$-sequences
is closed under multiplication.

{\bf Theorem C} (D.G.\,Wagner, \cite{wagner}). { \it  Let 
$P_1, P_2 \in \mathbb{R}[x]$ be real polynomials such that both $\mathcal{L}(P_1)$ and 
$\mathcal{L}(P_2)$ have only real zeros located in the segment $[-1, 0].$ Then the
polynomial $\mathcal{L}(P_1\cdot P_2)$ also has only real zeros 
located in the segment $[-1, 0].$ }

\begin{Remark}   It is known that the set of $\mathrm{TP}$-sequences is not closed under
term-by-term multiplication. The following example was given in \cite{kavish}.
We consider the sequence $ (k)_{k=0}^\infty$ with the generating
function $\sum_{k=0}^\infty  k z^k= \frac{z}{(1-z)^2}.$ By Theorem~A, $ (k)_{k=0}^\infty
\in \mathrm{TP}.$ Let us consider the function $f(z) = \frac{1}{(1-z)(2-z)}= \sum_{k=0}^\infty
b_k z^k$  (we have  $b_k =1 - \frac{1}{2^{k+1}}$). By Theorem~A, $ (b_k)_{k=0}^\infty
\in \mathrm{TP}.$ But $\sum_{k=0}^\infty k b_k z^k = z f^\prime(z) =
 \frac{z(3 - 2z)}{(1-z)^2 (2-z)^2},$ this function has a positive zero, so the sequence
 of its coefficients is not a $\mathrm{TP}$-sequence. For further details about term-by-term 
multiplication of totally positive sequences  see \cite{kavish}.
 
 By Theorem~C, the set of totally positive 
 sequences interpolated by polynomials is closed under
term-by-term multiplication.
\end{Remark}

The following open problems connected with the Problem~\ref{pr2} are of interest.

\begin{Problem}
\label{pr3}
To describe the set of real polynomials $P\in \mathbb{R}[x]$ with positive leading
coefficients and only real non-positive zeros such that  the polynomials $\mathcal{L}(P)$
have only real zeros located in the segment $[-1, 0].$
\end{Problem}

\begin{Problem}
\label{pr3}
To describe the set of real polynomials $P\in \mathbb{R}[x]$ with positive leading
coefficients and only real non-negative zeros such that  the polynomials $\mathcal{L}(P)$
have only real zeros located in the segment $[-1, 0].$
\end{Problem}

\begin{Problem}
\label{pr3a}
To describe the set of real polynomials $P\in \mathbb{R}[x]$ with positive leading
coefficients such that  the polynomials $\mathcal{L}(P)$
have only real negative zeros.
\end{Problem}

\section{Some useful formulas}

1. We will start with the famous Newton formulas for finite differences.
Suppose that $P(x)= a_0 + a_1 x +a_2 \frac{x(x-1)}{2!} 
+  \ldots +a_n \frac{x(x-1)\cdot \ldots \cdot (x-n+1)}{n!}.$ We consider
the following linear operator
\begin{equation}
\label{fd}
\Delta:  \mathbb{R}[x] \rightarrow \mathbb{R}[x], \quad (\Delta(P))(x) = P(x+1) -P(x).
\end{equation}
Then the following well-known formulas are valid
\begin{eqnarray}
\label{fd1}
&   a_0 = P(0); \\ \nonumber &
a_k = \Delta^k(P)(0) = \sum_{j=0}^k (-1)^j   C_k^j P(k-j),  k=1, 2, \ldots, n.
\end{eqnarray}

2.  For every $n \in \mathbb{N}$ the following identity is valid 
\begin{equation}
\label{f6}
\sum_{k=0}^n C_n^k \   \frac{x(x-1)\cdot \ldots \cdot (x-k+1)}{k!}= \frac{(x+1)(x+2) \cdot 
\ldots \cdot (x+n)}{n!}
\end{equation}
and can be easily proved by induction.

3. Let $P(x)= a_0 + a_1 x +a_2 \frac{x(x-1)}{2!} +  \ldots +a_n \frac{x(x-1)\cdot \ldots \cdot (x-n+1)}{n!}$ 
and $Q(x)= \mathcal{L}(P)(x) = a_0 +a_1 x +a_2 x^2 + \ldots + a_n x^n.$ We have
$$
  Q(-(x+1)) = \sum_{k=0}^n (-1)^k a_k \sum_{j=0}^k C_k^j x^j =: \tilde{Q} (x), 
$$
thus
$$ \mathcal{L}^{-1}(\tilde{Q} )(x) = \sum_{k=0}^n (-1)^k a_k \sum_{j=0}^k C_k^j  \frac{x(x-1)\cdot \ldots \cdot (x-j+1)}{j!}.  $$
Using (\ref{f6}), we get
$$\sum_{k=0}^n  (-1)^k a_k \sum_{j=0}^k C_k^j  \frac{x(x-1)\cdot \ldots \cdot (x-j+1)}{j!} = $$
$$ \sum_{k=0}^n (-1)^k  \frac{a_k}{k!} (x+1)(x+2)\cdot \ldots \cdot (x +k ) = $$
$$   \sum_{k=0}^n   \frac{a_k}{k!} (-x-1)(-x-2)\cdot \ldots \cdot (-x -k ) = P(-x-1).$$
Finally we obtain
\begin{equation}
\label{fd2}
\mathcal{L}(P(-x-1)) (x) = Q(-x-1).
\end{equation}
Note that if $Q$ has only real zeros located in the segment $[-1, 0],$ 
then $Q(-x-1)$ also  has only real zeros located in the segment $[-1, 0].$
Thus, we have proved the following statement.
 
\begin{Statement}
\label{St0}
Let $P$ be a real polynomial of degree $n\in \mathbb{N}.$  If $(P(k))_{k=0}^\infty \in \mathrm{TP},$ then $\left((-1)^n 
P(-k-1)\right)_{k=0}^\infty \in \mathrm{TP}.$
\end{Statement}

\section{Necessary conditions}

Suppose that a polynomial 
$$P(x)= a_0 + a_1 x +a_2 \frac{x(x-1)}{2!} 
+  \ldots +a_n \frac{x(x-1)\cdot \ldots \cdot (x-n+1)}{n!}$$
is such that
$$\mathcal{L}(P)
= a_0 +a_1 x +a_2 x^2 + \ldots + a_n x^n$$ 
has only real zeros located in the segment $[-1, 0].$ We have mentioned  the 
necessary condition that all  coefficients of $\mathcal{L}(P)$ are of the same signs. 

The following statement gives some more simple necessary conditions.

\begin{Statement}
\label{st1}
Suppose that a real polynomial 
$$P(x)= a_0 + a_1 x +a_2 \frac{x(x-1)}{2!} 
+  \ldots +a_n \frac{x(x-1)\cdot \ldots \cdot (x-n+1)}{n!},$$
$a_n=1, $   is such that $(P(k))_{k=0}^\infty \in \mathrm{TP}.$ Then

1. All real zeros of $P$ belong to the segment $[- n, n-1].$

2. For every $j=0, 1, 2, \ldots, n-1$ we have $a_j \leq C_n^j.$

3. For every $j= 1, 2, \ldots, n-1$ such that $a_{j-1}>0$  we have
$$\frac{a_j^2}{a_{j-1}a_{j+1}} \geq \frac{j+1}{j} \cdot \frac{n-j+1}{n-j}$$
(Newton inequalities).

\end{Statement}

\begin{proof}

The statement that all non-negative zeros of $P$ are in the segment $[0, n-1]$
follows from the fact that all coefficients of $P$ are of the same signs.
The statement that all non-positive zeros of $P$ are in the segment $[-n, 0]$
follows from the previous statement and formula~(\ref{fd2}). Statements~2 and 3
are well-known necessary conditions for a polynomial to have all real zeros
in the statement $[-1, 0].$
\end{proof}

It is interesting to compare the following necessary conditions with Theorem~B.

\begin{Theorem}
\label{th1}
Suppose that a real polynomial 
$$P(x)= a_0 + a_1 x +a_2 \frac{x(x-1)}{2!} 
+  \ldots +a_n \frac{x(x-1)\cdot \ldots \cdot (x-n+1)}{n!},$$
$a_n=1, $   is such that $(P(k))_{k=0}^\infty \in \mathrm{TP}.$ 

1. If there exists $k=1, 2, \ldots, n-1$ such that $P(k)=0,$ then
$P(0) = P(1) = \ldots = P(k-1)=0.$

2.  For every $m\in \mathbb{N}$ we get
\begin{equation}
\label{f110}
P(- m) = \frac{1}{(m-1)!}  \left(x^{m-1} Q(x)\right)^{(m-1)}|_{x=-1}.
\end{equation}

3.  If there exists $l \in \mathbb{N}$ such that $P(-l)=0,$ then
$P(-1) = P(-2) = \ldots = P(-(l-1))=0.$

4. If $\deg P = 2l+1, l \in \mathbb{N}\cup \{0\},$  then there is a root of
$P$ in the segment $[-1, 0].$

\end{Theorem}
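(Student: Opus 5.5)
The plan is to work throughout with $Q=\mathcal{L}(P)=a_0+a_1x+\ldots+a_nx^n$. By the discussion preceding Problem~\ref{pr2} and Remark~\ref{r1}, all zeros of $Q$ lie in $[-1,0]$ and $a_j\ge 0$ for all $j$, with $a_n=1$. I will also write $Q(x)=\prod_{i=1}^n(x-x_i)$ with $x_i\in[-1,0]$. Each of the four items then reduces to evaluating $P$ at integers in terms of the $a_j$.

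\emph{Item 1.} For an integer $k\ge 0$ the binomial basis gives $P(k)=\sum_{j=0}^{k}C_k^j a_j$. All terms are nonnegative, so $P(k)=0$ forces $a_0=\ldots=a_k=0$. Then for every $i<k$ we get $P(i)=\sum_{j\le i}C_i^ja_j=0$.

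\emph{Item 2.} For $m\in\mathbb{N}$,
$$\frac{(-m)(-m-1)\cdots(-m-j+1)}{j!}=(-1)^jC_{m+j-1}^{j},$$
so $P(-m)=\sum_j(-1)^jC_{m+j-1}^{j}a_j$. On the other side, $\frac{1}{(m-1)!}\left(x^{m-1}Q(x)\right)^{(m-1)}=\sum_j a_jC_{j+m-1}^{m-1}x^j$. Evaluating at $x=-1$ gives the same sum, which proves (\ref{f110}). This is a routine coefficient comparison.

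\emph{Item 3.} This is the only step needing an idea. I will prove an auxiliary claim: if $R(x)=\prod_{i=1}^N(x-y_i)$ with all $y_i\in[-1,0]$ and $R^{(s)}(-1)=0$, then $-1$ is a zero of $R$ of multiplicity at least $s+1$. To see this, expand $R^{(s)}(-1)/s!$ as the sum, over all choices of $N-s$ indices, of the products $\prod(-1-y_i)$. Every factor $-1-y_i$ is $\le 0$, so all these products have the same sign $(-1)^{N-s}$ or vanish. Hence the sum is zero only if every product of $N-s$ factors vanishes, i.e. at least $s+1$ of the $y_i$ equal $-1$.

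Now apply the claim to $R=x^{l-1}Q$ with $s=l-1$. By item~2, $P(-l)=0$ means $R^{(l-1)}(-1)=0$, so $-1$ is a zero of $x^{l-1}Q$, hence of $Q$, of multiplicity at least $l$. Then for $1\le m<l$ the polynomial $x^{m-1}Q$ vanishes at $-1$ to order $\ge l>m-1$. Its $(m-1)$-st derivative therefore vanishes at $-1$, and item~2 gives $P(-m)=0$.

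\emph{Item 4.} By item~2 with $m=1$, $P(-1)=Q(-1)=\prod_{i=1}^n(-1-x_i)$, which is a product of an odd number $n=2l+1$ of nonpositive factors, so $P(-1)\le 0$. Also $P(0)=a_0\ge 0$. By the intermediate value theorem $P$ has a root in $[-1,0]$.
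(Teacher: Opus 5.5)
Your proof is correct and follows the paper's argument item by item: the binomial expansion of $P(k)$ with nonnegative $a_j$, the coefficient identity for $P(-m)$, the multiplicity of $-1$ as a zero of $x^{l-1}Q$, and the sign comparison $Q(0)\ge 0\ge Q(-1)$ for odd $n$. The only difference is in item~3, where you justify the step ``$(x^{l-1}Q)^{(l-1)}(-1)=0$ forces $-1$ to be a zero of multiplicity $\ge l$'' via the elementary symmetric functions of the nonpositive numbers $-1-y_i$; the paper simply asserts this step (your argument implicitly needs $s\le N$, which holds in the application).
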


\begin{proof}

1. We have for all $k=1, 2, \ldots, n-1$
\begin{equation}
\label{f7}
P(k) = \sum_{j=0}^k C_k^j  a_j.
\end{equation}
If k $P(k)=0,$ then, since all coefficients $a_j$ are of the same signs, we obtain
$a_0=a_1= \ldots = a_k =0$, thus $P(0) = P(1) = \ldots = P(k-1)=0.$

2. For  $P(x)= a_0 + a_1 x +a_2 \frac{x(x-1)}{2!} 
+  \ldots +a_n \frac{x(x-1)\cdot \ldots \cdot (x-n+1)}{n!},$ 
we denote by  $Q(x) = \mathcal{L}(P) (x) = a_0 +a_1x +a_2 x^2 + \ldots + a_n x^n.$
We have
\begin{equation}
\label{f8a}
P(0) = a_0  = Q(0).
\end{equation}
\begin{equation}
\label{f8}
P(-1) = a_0 - a_1 + a_2 -a_3 + \ldots + (-1)^n a_n = Q(-1).
\end{equation}
\begin{equation}
\label{f9}
P(-2) = a_0 - 2 a_1 + 3a_2 - \ldots + (-1)^n (n+1) a_n = (x Q(x))^\prime|_{x=-1}.
\end{equation}
Since $(P(k))_{k=0}^\infty \in \mathrm{TP},$ all zeros of $Q$ are real and in the segment $[-1, 0].$
Thus, all zeros of the polynomial $xQ(x)$ are real and in the segment $[-1, 0].$ Suppose 
that $P(-2)=0.$  That means that  $(x Q(x))^\prime|_{x=-1} =0,$  whence  $-1$ is a root of $Q$ 
of multiplicity $\geq 2.$ In particular, $Q(-1) = P(-1) =0.$

For every $m\in \mathbb{N}$ we get
$$
P(- m) = \frac{1}{(m-1)!} \left( \frac{(m-1)! }{0!} a_0 - \frac{m!}{1!} a_1 + \frac{(m+1)!}{2!} a_2 -  \frac{(m+2)!}{3! } a_3  \right.$$   
$$\left.   + \ldots + (-1)^n \frac{(m+n-1)!}{n!} a_n\right ) =
\frac{1}{(m-1)!}  \left(x^{m-1} Q(x)\right)^{(m-1)}|_{x=-1}.
$$

3.  Since $(P(k))_{k=0}^\infty \in \mathrm{TP},$ all zeros of $Q$ are real and in the segment $[-1, 0].$
Thus, all zeros of the polynomial $x^{l-1} Q(x)$ are real and in the segment $[-1, 0].$ Suppose 
that $P(- l)=0.$  That means that  $\left(x^{l-1} Q(x)\right)^{(l-1)}|_{x=-1} =0,$  whence  $-1$ 
is a root of $Q$ of multiplicity $\geq l.$ Using (\ref{f110}), we obtain that  
$P(-1) = P(-2) = \ldots = P(-(l-1))=0.$

4. Let  $\deg P = \deg Q = 2l+1, l \in \mathbb{N}\cup \{0\}. $ Since $(P(k))_{k=0}^\infty \in \mathrm{TP},$ 
all zeros of $Q$  are real and in the segment $[-1, 0].$  Thus, $Q(0)\cdot Q(-1) \leq 0.$
By (\ref{f8a}) and (\ref{f8}) we get $P(0)\cdot P(-1) \leq 0.$ Hence, then there is a 
root of $P$ in the segment $[-1, 0].$


\end{proof}

In fact, we proved the following statement  about the possible 
roots of a polynomial $Q$ at the endpoints of a segment $[-1, 0].$

\begin{Statement}
\label{st2}
Suppose that a real polynomial 
$$P(x)= a_0 + a_1 x +a_2 \frac{x(x-1)}{2!} 
+  \ldots +a_n \frac{x(x-1)\cdot \ldots \cdot (x-n+1)}{n!},$$
$a_n=1, $   is such that $(P(k))_{k=0}^\infty \in \mathrm{TP},$  and
$Q(x)= a_0 + a_1 x +a_2 x^2 + \ldots + a_n x^n .$

1. The following four statements are equivalent:

1a. The polynomial $Q$ has a zero of
multiplicity $m\in \mathbb{N}$ at the point $x=0.$ 

1b. $a_0 = a_1 = \ldots =a_{m-1}=0.$

1c. $P(m-1)=0.$

1d. $P(0) = P(1) = \ldots = P(m-1)=0.$

2. The following three statements are equivalent.

2a. The polynomial $Q$ has a zero of
multiplicity $m\in \mathbb{N}$ at the point $x=-1.$

2b. $P(-m)=0.$

1c. $P(-m) = P(-(m-1)) = \ldots = P(-1)=0.$

\end{Statement}

\section{Sufficient conditions}

Let  $Q(x) = \sum_{k=0}^n a_k x^k$  be a polynomial with positive coefficients. 
We define the second quotients  $q_n$ as follows:

\begin{equation}
\label{qqq} 
q_k=q_k(Q):=\frac {a_{k-1}^2}{a_{k-2}a_k},\
2 \leq k \leq n.
\end{equation}

The formulas below follow by iteratively applying the definition of the second quotients.

\begin{equation}
\label{defq}
 a_k=\frac
{a_1}{q_2^{k-1} q_3^{k-2} \ldots q_{k-1}^2 q_k} \left(
\frac{a_1}{a_0} \right) ^{k-1},\    2 \leq k \leq n.
\end{equation}

The second quotients $q_k$ provide useful information about the distribution of zeros 
of the associated polynomial. This connection dates back to classical work of 
J.I. Hutchinson~\cite{hut}. In 1926, Hutchinson established one of the first general 
coefficient-based criteria for a polynomial (or an entire function)  with positive 
coefficients  to have only real non-positive zeros.  

{\bf Theorem D} (J. I. ~Hutchinson, \cite{hut}). { \it Let $Q(x)=
\sum_{k=0}^n a_k x^k$, $a_k > 0$ for all $k$. 
If
$$q_k(Q)\geq 4 \ \ \mbox{ for all} \ \ 2 \leq k \leq n,$$  
then all zeros of $Q$ are real and negative.}

It is easy to show that, if the estimation of $q_k(Q)$ only from below is
given then the constant  $4$  in $q_k (Q) \geq 4$ is the smallest possible to conclude
that $Q$ has only real zeros. However, if we have the estimation of $q_k(Q)$ from below and  
from above, then the constant $4$ in the condition $q_k(Q) \geq 4$ can be reduced to 
conclude that all zeros of a polynomial  $Q$  are real and negative, see  \cite{HishAn}.

{\bf Theorem E} (T.H.~ Nguyen, A.~Vishnyakova,  \cite{HishAn}).
{ \it  Let $Q(x) = \sum_{k=0}^n a_k x^k,$ $a_k > 0,$  be a polynomial, and $n \geq 4.$ Suppose that 
there exists $\alpha,  1 + \sqrt{5} \leq \alpha < 4,$ such that
 $q_k(Q) \in \left[\alpha, \frac{8}{\alpha(4 - \alpha)} \right]$ for all $k =2, 3, \ldots, n.$  
Then  all zeros of $Q$ are real and negative. }

The following theorem is a corollary of Theorems~D and E.

\begin{theorem}
\label{th2}
Let $P$ be a real polynomial 
$$P(x)= a_0 + a_1 x +a_2 \frac{x(x-1)}{2!} 
+  \ldots +a_n \frac{x(x-1)\cdot \ldots \cdot (x-n+1)}{n!},$$
$a_j>0 $ for all $j=0, 1, \ldots, n.$

1. Suppose that $P$ satisfies the conditions
$$ \frac {a_{k-1}^2}{a_{k-2}a_k} \geq 4, \    2 \leq k \leq n, $$
and
$$ \frac{a_{n-1}}{a_n} \leq 1. $$
Then $(P(k))_{k=0}^\infty \in \mathrm{TP}.$

2. Suppose that $n \geq 4$ and there exists $\alpha,  1 + \sqrt{5} \leq \alpha < 4,$ such that
 $\frac {a_{k-1}^2}{a_{k-2}a_k} \in \left[\alpha, \frac{8}{\alpha(4 - \alpha)} \right]$ for all 
 $k =2, 3, \ldots, n,$  and $ \frac{a_{n-1}}{a_n} \leq 1. $  Then $(P(k))_{k=0}^\infty \in \mathrm{TP}.$

\end{theorem}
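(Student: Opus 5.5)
We reduce the statement to Problem~\ref{pr2}. Since $(P(k))_{k=0}^\infty \in \mathrm{TP}$ holds if and only if $Q=\mathcal{L}(P)=a_0+a_1x+\ldots+a_nx^n$ has only real zeros in $[-1,0]$ (all $a_j>0$ here, so the sign condition is automatic), it suffices to show two things. First, $Q$ has only real negative zeros. Second, every zero of $Q$ is $\geq -1$.

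\textbf{Reality of the zeros.} The second quotients $q_k(Q)$ are exactly $\frac{a_{k-1}^2}{a_{k-2}a_k}$. In part~1 the hypothesis $q_k(Q)\ge 4$ lets us invoke Theorem~D. In part~2 the hypothesis $q_k(Q)\in\left[\alpha,\frac{8}{\alpha(4-\alpha)}\right]$ lets us invoke Theorem~E. Either way all zeros of $Q$ are real and negative.

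\textbf{Location of the zeros.} Let the zeros be $-x_1,\ldots,-x_n$ with $x_i>0$. By Vieta, $\sum_i x_i=a_{n-1}/a_n\le 1$. Hence each $x_i\le 1$, so every zero lies in $[-1,0)$. Together with the equivalence from the introduction (via $S(x)=(1-x)^nQ\left(\frac{x}{1-x}\right)$ and Theorem~A), this gives $(P(k))_{k=0}^\infty\in\mathrm{TP}$.

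\textbf{Remarks.} There is no real obstacle; the argument is a direct corollary. The only points to note are that positivity of the coefficients makes the zeros negative rather than merely nonpositive, and that the trace bound is crude but sufficient. The case $n=1$ in part~1 has no quotient condition; there $Q=a_0+a_1x$ has zero $-a_0/a_1$, and the same Vieta bound applies directly.
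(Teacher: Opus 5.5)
Your proof is correct. The reality step is the same as the paper's: Theorems~D and~E applied to $Q=\mathcal{L}(P)$. Where you differ is in showing the zeros lie in $[-1,0]$.

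\textbf{The paper's localization step.} The paper uses the quotient hypothesis a second time. Since every $q_k>1$, the ratios $a_{k-1}/a_k$ strictly increase. Hence for real $x\le -a_{n-1}/a_n$ the terms $a_k|x|^k$ increase with $k$. Grouping consecutive terms of the alternating sum then gives $(-1)^nQ(x)>0$, so $Q$ has no real zero in $(-\infty,-a_{n-1}/a_n]$, and this interval contains $(-\infty,-1]$.

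\textbf{Your localization step.} You use only the conclusion of Theorems~D/E. Once all zeros $-x_i$ are known to be real and negative, Vieta gives $\sum_i x_i=a_{n-1}/a_n\le 1$, and positivity of the $x_i$ forces each $x_i\le 1$.

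\textbf{Comparison.} Your route is shorter and never needs the monotonicity of the ratios; it even yields the stronger bound $\sum_i x_i\le 1$. The paper's route is essentially a real-variable form of the Enestr\"om--Kakeya bound. It excludes real zeros beyond $-a_{n-1}/a_n$ for any positive log-concave coefficient sequence, independently of real-rootedness, so it would survive in settings where one does not know that all zeros are real and of one sign. Here that knowledge is already supplied by Theorems~D and~E, so both arguments reach the same conclusion.
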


\begin{proof}
Theorems D and E show that under assumptions of Theorem~\ref{th2} 
all zeros of  $Q(x) = \mathcal{L}(P) (x) = a_0 +a_1x +a_2 x^2 + \ldots + 
a_n x^n$ are real and negative. It remains to prove that these zeros 
are in the segment$[-1, 0].$ Under assumptions of Theorem~\ref{th2} 
we have 
$$0 < \frac{a_0}{a_1} < \frac{a_1}{a_2} <  \ldots < \frac{a_{n-2}}{a_{n-1}} < \frac{a_{n-1}}{a_n},  $$
thus, for $x\in  (-\infty, - \frac{a_{n-1}}{a_n}]$ we get
$$ a_0 < a_1 |x| < a_2 |x|^2 < \ldots < a_{n-1}|x|^{n-1} < a_n |x|^n.  $$
So, for all $x\in  (-\infty, - \frac{a_{n-1}}{a_n}]$ we obtain
$$(-1)^n Q(x) = (a_n |x|^n - a_{n-1}|x|^{n-1}) + (a_n |x|^{n-2} - a_{n-1}|x|^{n-3}) +\ldots >0.  $$
Whence $Q$ does not have zeros in $ (-\infty, - \frac{a_{n-1}}{a_n}],$ and since $ \frac{a_{n-1}}{a_n} \leq 1$
all zeros of $Q$ are in the segment $[-1, 0].$ 
\end{proof}

\section{Some examples}

We will start with the following trivial example.

\begin{Example}
\label{ex1} For a polynomial $P$ of degree $1$ we have $\mathcal{L}(P) = P,$
so the polynomial $\mathcal{L}(P)$ has only real zero located in the segment $[-1, 0]$ if
and only if the polynomial $P$ has only real zero located in the segment $[-1, 0].$
\end{Example}

\begin{Remark} Polynomial $P_1(x)=x+2$ has only real negative zeros, but the sequence
$(P_1(k))_{k=0}^\infty $ is not a $\mathrm{TP}$-sequence. Using Laguerre's Theorem on
$CZDS$-sequences  (see \cite{gpol} or \cite[pp. 314-321]{pol}) and Theorem by  
G.~P\'olya and J.~Schur with the description  of the multiplier sequences  (see \cite{polsch}, 
\cite[pp. 100-124]{pol} and\cite[pp. 29-47]{O}), one can prove that
if a polynomial $P$ has only real negative zeros, then the sequence $(\frac{P(k)}{k!})_{k=0}^\infty $ 
is totally positive.

\end{Remark}

\begin{Example}
\label{ex2}    Let $ P(x) = (x+\alpha)(x+\beta), \alpha, \beta \in \mathbb{R}.$ Then 
it is easy to calculate that  
$\sum_{k=0}^\infty (k+\alpha)(k+\beta) x^k = 
\frac{(1-\alpha)(1-\beta) x^2 + (1+ \alpha + \beta - 
2 \alpha \beta) x+ \alpha \beta}{(1-x)^3}. $ Thus,
$$(P(k))_{k=0}^\infty 
\in \mathrm{TP} \Leftrightarrow 
\left\{ 
  \begin{array}{c}
   (1-\alpha)(1-\beta)\geq 0 \\
   1+ \alpha + \beta - 2 \alpha \beta \geq 0 \\
  \alpha \beta \geq 0 \\
  D= (1+ \alpha + \beta - 2 \alpha \beta)^2 -
  4 \alpha \beta  (1-\alpha)(1-\beta) \geq 0. \\
    \end{array}
 \right.
 $$
 The following picture shows the corresponding set of points $(\alpha, \beta).$
By  (\ref{fd2}) and Statement~\ref{St0}, it is symmetric
 with respect to the point $(\frac{1}{2}, \frac{1}{2}).$

 \begin{figure}[h!]
\begin{center}
\includegraphics[width=10cm]{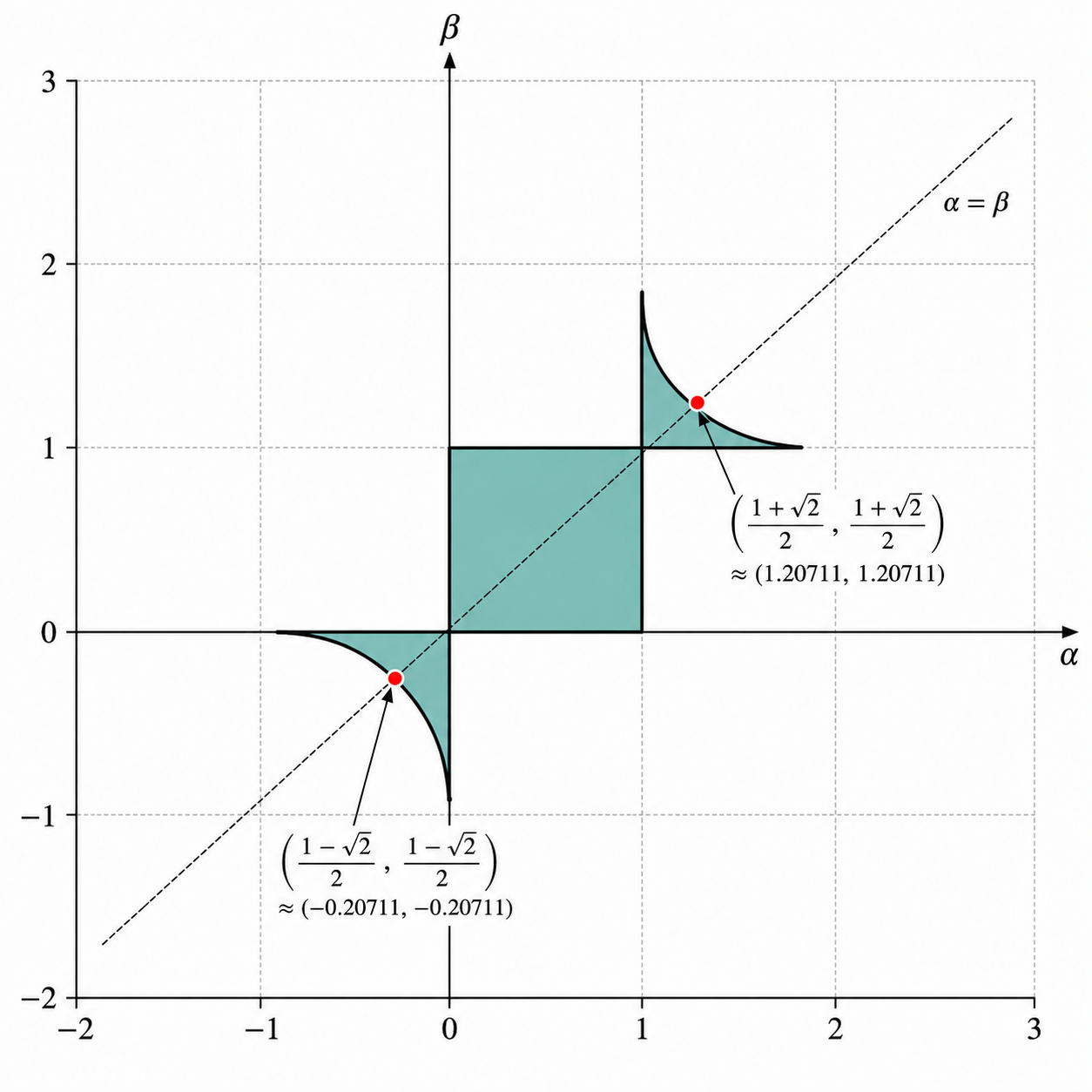}
\end{center}
\end{figure}
\end{Example}

This example shows that sufficient conditions in Theorem~B
are not necessary ones.  For example, let 
$$P_1(x)= \left(x+ \frac{\sqrt{2} +1}{2}\right)^2 = 2\cdot \frac{x(x-1)}{2}+ (\sqrt{2} +2)x + 
\frac{3+2\sqrt{2}}{4},$$
then
$$ Q_1(x)= \mathcal{L}(P_1)(x) = 2x^2 +  (\sqrt{2} +2)x + 
\frac{3+2\sqrt{2}}{4}= 2\left(x + \frac{2+\sqrt{2}}{4}  \right)^2. $$
The polynomial $ Q_1$  has positive coefficients and only real zeros located in the segment $[-1, 0].$
So, $(P_1(k))_{k=0}^\infty \in \mathrm{TP}.$ We see that the polynomial  $P_1$ has zeros with modulus greater 
than 1, but $P_1(-1) \ne 0.$ 

Let
$$P_2(x)= \left(x - \frac{\sqrt{2} -1}{2}\right)^2 = 2\cdot \frac{x(x-1)}{2}+(2- \sqrt{2}) x + 
\frac{3-2\sqrt{2}}{4},$$
then
$$ Q_2(x)= \mathcal{L}(P_2)(x) = 2x^2 +  (2 -\sqrt{2})x + 
\frac{3-2\sqrt{2}}{4}= 2\left(x + \frac{2-\sqrt{2}}{4}  \right)^2. $$
The polynomial $ Q_2$  has positive coefficients and only real zeros located in the segment $[-1, 0].$
So, $(P_2(k))_{k=0}^\infty \in \mathrm{TP}.$ We see that the polynomial  $P_2$ has positive zeros, 
but $P_2(0) \ne 0.$ 

\begin{Example}
\label{ex2a}  
Let $P(x)= x^2 + \frac{1}{8} = 2\cdot \frac{x(x-1)}{2}+ x + \frac{1}{8},$
then
$$ Q(x)= \mathcal{L}(P)(x) = 2x^2 + x + \frac{1}{8}=2\left(x+\frac{1}{4}\right)^2. $$
The polynomial $ Q$  has positive coefficients and only real zeros located in the segment $[-1, 0].$
So, $(P(k))_{k=0}^\infty \in \mathrm{TP}.$ We see that zeros of the polynomial  $P$ are not real.
\end{Example}

\begin{Example}
\label{ex2b} 
For $P_n(x) =x^n, n\in \mathbb{N},$ we obtain
$$ \sum_{k=0}^\infty k^n x^k = \frac{S_n(x)}{(1-x)^{n+1}},   $$
where $S_n(x) =\sum_{k=1}^{n}A(n,k)x^k$ is the Euler polynomial generated 
by the Euler's numbers $A(n,k)$ counting the number of permutations 
of  $\{1,2,\ldots, n\}$  with exactly $k-1$ descents, i.e. pairs $\sigma(i)>\sigma(i+1)$.
Using (\ref{ff1}) we get
$$Q_n(x)= \mathcal{L}(P_n)(x) =   (1+x)^n S_n\left(\frac{x}{1+x}\right) =
\sum_{k=1}^{n}A(n,k)x^k(1+x)^{n-k}.$$

Thus, our linear operator $\mathcal{L}$ in the standard  base of the space $\mathbb{R}[x]$
can be written in the form
$$\mathcal{L}(x^n) = \sum_{k=1}^{n}A(n,k)x^k(1+x)^{n-k}.$$
Some important properties of the linear operator $\mathcal{L}$ in the  form
written above  (more precisely, linear operator $\mathcal{A}$ such that $\mathcal{A}(x^n) 
= \sum_{k=1}^{n}A(n,k)x^k$) were studied in  the papers \cite{branden} by P.~Br\"and\'en
and K.~Jochemko, and \cite{athan} by C.A.~Athanasiadis.

\end{Example} 

\begin{Example}
\label{ex2c} 
Let $P_n(x) =\left(x + \frac{1}{2}\right)^n =:
\sum_{k=0}^n a_k   \frac{x(x-1)\cdot \ldots \cdot (x-k+1)}{k!}, 
n\in \mathbb{N}.$ Using (\ref{fd1}) we obtain
$$ a_k=  \sum_{j=0}^k (-1)^j   C_k^j P_n(k-j) = \sum_{j=0}^k (-1)^j   C_k^j 
\left(\frac{2k-2j+1}{2}  \right)^n,  k= 0, 1,  \ldots, n.
 $$
 Then
 $$ Q_n(x)= \mathcal{L}(P_n)(x) = \sum_{k=0}^n x^k  \sum_{j=0}^k (-1)^j   C_k^j 
\left(\frac{2k-2j+1}{2}  \right)^n.$$
Using Theorem~B by F.~Brenti, one can conclude that all zeros of $Q_n$ are
real and located in the segment $[0, 1].$ These polynomials are a shifted and scaled 
variation of the Eulerian polynomials of type B, typically denoted as $B_n(z)$ (also
 known as MacMahon's Eulerian polynomials). 

The classical Type B Eulerian polynomials can be defined by the generating series
$$\sum_{m=0}^\infty (2m+1)^n z^m = \frac{B_n(z)}{(1-z)^{n+1}}.$$

The family $Q_n(x)$ is directly related to $B_n(z)$ via the following exact rational transformation
$$Q_n(x) = \frac{(1+x)^n}{2^n} B_n\left(\frac{x}{1+x}\right).$$
Because of the direct relationship to $B_n(z)$, which is known to have distinct, real, and strictly negative 
roots, we can say that the roots of $Q_n(x)$  lie strictly in the open interval $(-1, 0)$, they are distinct
and symmetric around $-1/2.$

\end{Example} 

In connection with the previous example (see also Open Problem~\ref{pr4})
we consider the following set of polynomials.

\begin{Example}
\label{ex2d} 
Let $P_{n, t}(x) =\left(x + t\right)^n =:
\sum_{k=0}^n a_k(t)   \frac{x(x-1)\cdot \ldots \cdot (x-k+1)}{k!}, 
n\in \mathbb{N}.$ Using (\ref{fd1}) we obtain
$$ a_k(t)=  \sum_{j=0}^k (-1)^j   C_k^j P_{n, t}(k-j) = \sum_{j=0}^k (-1)^j   C_k^j 
(k-j+t)^n,  k= 0, 1,  \ldots, n.
 $$
 Then
 $$ Q_{n, t}(x)= \mathcal{L}(P_n)(x) = \sum_{k=0}^n x^k  \sum_{j=0}^k (-1)^j   C_k^j 
(k-j+t)^n.$$

For a given $n\in \mathbb{N}$ we want to find the set of values of $t$ such that 
$(P_{n, t}(k))_{k=0}^\infty \in \mathrm{TP}, $ in other words such that all zeros of $ Q_{n, t}$
are real and located in the segment $[-1, 0].$ We denote this set by 
$$M_n = \{ t\in \mathbb{R} | (P_{n, t}(k))_{k=0}^\infty \in \mathrm{TP}   \}.$$
By (\ref{fd2}), if $(P_{n, t}(k))_{k=0}^\infty \in \mathrm{TP},$ then $((-1)^n P_{n, t}(-1-k))_{k=0}^\infty \in TP,$
and $(-1)^n P_{n, t}(-1-x) =(x+1-t)^n =P_{n, 1-t}.$ Thus, $t\in M_n \Rightarrow (1-t)\in M_n,$
so it is sufficient to consider the case $t \geq \frac{1}{2}.$

The answer is simple for odd  values of $n.$  If $n$ is an odd number, $Q_{n, t}(0)=P_{n, t}(0)= t^n >0$  and 
all zeros of $ Q_{n, t}$ are real and located in the segment $[-1, 0], $ then  $Q_{n, t}(-1)\leq 0.$
It is easy to check that $Q_{n, t}(-1)  = P_{n, t}(-1) =(-1+t)^n \leq 0$  (see (\ref{f8})). So, $t\leq 1.$
We obtain that $t\in M_n \Rightarrow t \in [0, 1].$ On the other hand, using Theorem~B by F.~Brenti 
we get $[0, 1] \subset M_n.$ Whence, for every $k\in \mathbb{N}\cup\{0\}$ we have $M_{2k+1} = [0, 1].$

For the case of even $n$ the situation is much more complicated. We mention that by statement 3 in Theorem~\ref{th1} we get that for all $k, j\in \mathbb{N}, j \geq 2,$
we have $-j \notin M_{2k}.$ Calculation shows that the following conjecture is true.

\begin{Conjecture} For all  $k\in \mathbb{N}$ we have $M_{2k} = [-c_{2k}, 1+ c_{2k}], 0 < c_{2k} <2,$ and $c_2 \leq c_4 \leq c_6 \leq \ldots$ 
\end{Conjecture}

The proof of this fact and the possible value of the limit $\lim_{k\to \infty} c_{2k} $ are open questions. 

\end{Example} 

\begin{Example}
\label{ex3}
 For $n \in \mathbb{N}$ consider a polynomial 
\begin{equation}
\label{f5}
P_{0,n}(x) = \frac{x(x-1)\cdot \ldots \cdot (x-n+1)}{n!}.
\end{equation}
We have $\mathcal{L}(P_{0,n}) = x^n,$ so the polynomial $\mathcal{L}(P_{0,n})$ has only real zeros located 
in the segment $[-1, 0].$ Zeros of $P_{0,n}$ are $\{0, 1, 2,  \ldots, n-1\},$ so these zeros satisfy
sufficient conditions of Theorem~B.

For a given $n \in \mathbb{N}$ and $s=0, 1, 2, \ldots, n$ we consider a polynomial
\begin{equation}
\label{f66}
P_{s,n}(x) = \frac{(x+s)(x+s-1)\cdot \ldots \cdot (x+s-n+1)}{n!} = P_{0,n}(x+s).
\end{equation}
We have
$$P_{s,n}(x) = \frac{(x+s)(x+s-1)\cdot \ldots \cdot (x+s-n+1)}{n!} =$$
 $$ \frac{(x+s-1)\cdot \ldots \cdot (x+s-n+1)(x+s-n)}{n!}  + $$
 $$ \frac{(x+s-1)(x+s-2)\cdot \ldots \cdot (x+s-n+1)}{(n-1)!} =   
 P_{s-1,n}(x) + P_{s,n-1}(x).$$
 The following formula can be easily proved by induction
\begin{equation}
\label{f67}
\mathcal{L} (P_{s,n})(x) = x^{n-s}(1+x)^s, n \in \mathbb{N},\  s=0, 1, 2, \ldots, n.
\end{equation}
For $s=n$ one can see that $P_{n,n}(x) =  \frac{(x+1)(x+2) \cdot 
\ldots \cdot (x+n)}{n!} $ and  $\mathcal{L} (P_{n,n})(x) = (1+x)^n $
(cf. with (\ref{f6})).

Zeros of $P_{s,n}$ are $\{ -s, -(s-1), \ldots, -(s-(n-2)), -(s-(n-1))\},$ so these zeros satisfy
sufficient conditions of Theorem~B.

\end{Example}

\begin{Example}
\label{ex4}
For a given $n \in \mathbb{N}\cup\{0\}$ consider a polynomial 
\begin{equation}
\label{f68}
Q_n(x)= n! \left(x+ \frac{1}{2}\right)^n = n! \sum_{k=0}^n C_n^k \frac{x^k}{2^{n-k}}.
\end{equation}
Formula (\ref{fd2}) shows that if the set of zeros of a polynomial $Q_n = \mathcal{L}(P_n)$ is symmetric 
with  respect to the point $\frac{1}{2},$ then  the set of zeros of a polynomial $P_n$ is also symmetric with
respect to the point $\frac{1}{2}.$ The set of polynomials  $Q_n(x)= n! \left(x+ \frac{1}{2}\right)^n$ is interesting 
for us because for every $n$ the polynomial $Q_n$ has zeros that are in some sense ``the most symmetric'' with  
respect to  the point $\frac{1}{2}.$

All zeros of $Q_n$ are real and belong to the segment $[-1, 0].$
Let us denote by
\begin{equation}
\label{f69}
P_n(x) = \mathcal{L}^{-1} \left(Q_n\right)(x) = n! \sum_{k=0}^n C_n^k \frac{x(x-1)(x-2)
\cdot \ldots \cdot (x-k+1)}{k! 2^{n-k}}.
\end{equation}
Note that $Q_n(-1-x) = (-1)^n Q_n(x),$ thus by (\ref{fd2})
\begin{equation}
\label{f70}
P_n(-1-x) = (-1)^n P_n(x).
\end{equation}
Denote by
\begin{equation}
\label{f71}
S_n(x) = P_n\left(- \frac{1}{2} +x\right),
\end{equation}
so that
\begin{equation}
\label{f72}
S_n(- x) =  (-1)^n S_n(x).
\end{equation}
Calculations show that
$$S_0(x)=1,$$
$$S_1(x) =x,$$
$$S_2(x)= x^2 +\frac{1}{4},$$
$$S_3(x)= x^3  + \frac{5x}{4},$$
$$S_4(x)= x^4 + \frac{7 x^2}{2} + \frac{9}{16}.$$
It is easy to check that all zeros of polynomials $S_k, 0\leq k \leq 4,$
are purely imaginary, so all zeros of polynomials $P_k, 0\leq k \leq 4,$
have real parts equal $- \frac{1}{2}.$

\begin{theorem}
\label{th1.0} For every $n\in \mathbb{N}\cup\{0\}$ all zeros of the polynomial
$P_n(x) = n! \sum_{k=0}^n C_n^k \frac{x(x-1)(x-2)
\cdot \ldots \cdot (x-k+1)}{k! 2^{n-k}}$ are simple and have real parts equal 
$- \frac{1}{2}.$ Equivalently, all zeros of the polynomial
$S_n(x) = P_n\left(- \frac{1}{2} +x\right)$ are simple and purely imaginary.
Moreover, zeros of polynomials $P_n$ and $P_{n+1}$ interlace for all
$n\in \mathbb{N}.$ Equivalently,  zeros of polynomials $S_n$ and 
$S_{n+1}$ interlace for all  $n\in \mathbb{N}.$
\end{theorem}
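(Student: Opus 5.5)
The plan is to reduce the statement to a three-term recurrence with positive coefficients, and then apply Favard's theorem or a direct Sturm-sequence argument.

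\emph{Step 1: generating function.} By (\ref{ff1}), $(1-z)^n Q_n\bigl(\tfrac{z}{1-z}\bigr)=n!\,2^{-n}(1+z)^n$. Hence, as in (\ref{f3}),
$$\sum_{k=0}^\infty P_n(k)z^k=\frac{n!\,(1+z)^n}{2^n(1-z)^{n+1}} .$$
Multiply by $t^n/n!$ and sum over $n$. The geometric series gives
$$\sum_k z^k\sum_n P_n(k)\frac{t^n}{n!}=\frac{1}{(1-t/2)-z(1+t/2)} .$$
Reading off the coefficient of $z^k$, we get $\sum_n P_n(k)t^n/n!=(1+t/2)^k(1-t/2)^{-k-1}$. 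Both sides are polynomial in the variable for each fixed power of $t$, so the identity extends to all $x$:
$$\sum_{n\ge0}P_n(x)\frac{t^n}{n!}=\frac{(1+t/2)^{x}}{(1-t/2)^{x+1}},\qquad \sum_{n\ge0}S_n(x)\frac{t^n}{n!}=\Bigl(\frac{1+t/2}{1-t/2}\Bigr)^{x}\bigl(1-t^2/4\bigr)^{-1/2}.$$

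\emph{Step 2: recurrence.} Call the last function $G(x,t)$. We have $\partial_t\log G = x\bigl(\tfrac{1/2}{1+t/2}+\tfrac{1/2}{1-t/2}\bigr)+\tfrac{t/4}{1-t^2/4}$, so
$$(1-t^2/4)\,\partial_tG=(x+t/4)\,G .$$
Comparing coefficients of $t^n/n!$ yields
$$S_{n+1}(x)=x\,S_n(x)+\frac{n^2}{4}\,S_{n-1}(x),\qquad S_0=1,\ S_1=x .$$
This agrees with the computed $S_2,S_3,S_4$; for instance $S_3=xS_2+S_1=x^3+\tfrac54x$.

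\emph{Step 3: rotation to the real line.} Put $R_n(y):=i^{-n}S_n(iy)$. These are real monic polynomials satisfying
$$R_{n+1}(y)=y\,R_n(y)-\frac{n^2}{4}\,R_{n-1}(y),$$
with positive coefficients $n^2/4$. By Favard's theorem the $R_n$ are orthogonal with respect to a positive measure on $\mathbb R$; in fact they are Meixner--Pollaczek polynomials with $\lambda=1/2$, $\phi=\pi/2$, with weight proportional to $|\Gamma(\tfrac12+iy)|^2$. Therefore each $R_n$ has $n$ real simple zeros, and the zeros of $R_n$ and $R_{n+1}$ strictly interlace.

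The same conclusion follows elementarily by induction, in the classical Sturm way. At a zero $y_0$ of $R_n$ the recurrence gives $R_{n+1}(y_0)=-\tfrac{n^2}{4}R_{n-1}(y_0)$, so the signs of $R_{n+1}$ alternate on the zeros of $R_n$. Together with the behaviour at $\pm\infty$, this places exactly one zero of $R_{n+1}$ in each gap and one beyond each extreme zero.

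Translating back via $x=iy$ shows that the zeros of $S_n$ are simple and purely imaginary and interlace along the imaginary axis. Consequently the zeros of $P_n(x)=S_n(x+\tfrac12)$ lie on $\operatorname{Re}x=-\tfrac12$ and interlace there.

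The main obstacle is Step 2. Obtaining the closed-form generating function and the exact recurrence coefficient $n^2/4$ needs care: the coefficient extraction in Step 1 must be justified for all real $x$, not just integers, and this uses polynomial identity in $x$. Once the recurrence with positive coefficients is established, the rest is standard orthogonal polynomial theory.
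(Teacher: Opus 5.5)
Your proof is correct. It has the same skeleton as the paper's: a three-term recurrence for $S_n$, the rotation $R_n(y)=i^{-n}S_n(iy)$ (the paper's $M_n$), and Favard's theorem. The genuine difference is how you obtain the recurrence $S_{n+1}=xS_n+\frac{n^2}{4}S_{n-1}$.

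The paper (Lemma~\ref{L1}) expands $S_n$ in the shifted factorial basis $\prod_{j=1}^k\bigl(x-\frac{2j-1}{2}\bigr)$ and checks the recurrence coefficient by coefficient. This reduces to the identity $k^2+(2k+1)(n-k)+(n-1-k)(n-k)=n^2$. It is elementary but long, and it does not explain where $n^2/4$ comes from.

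You instead use the $\mathcal{L}$-calculus already set up in (\ref{f3})--(\ref{ff1}) to get the closed-form exponential generating function $\bigl(\frac{1+t/2}{1-t/2}\bigr)^x(1-t^2/4)^{-1/2}$. The recurrence then falls out in one line from the logarithmic derivative $(1-t^2/4)G_t=(x+t/4)G$. The only extra care needed is extending the identity from integers $k$ to all $x$, and you justify this correctly: each $t$-coefficient is a polynomial in $x$.

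Your route also gives more than the paper's. It identifies $R_n$ as the monic Meixner--Pollaczek polynomials with $\lambda=\frac12$, $\phi=\frac{\pi}{2}$, so the orthogonality measure is explicit, $|\Gamma(\frac12+iy)|^2\,dy=\frac{\pi}{\cosh \pi y}\,dy$; the paper gets only the existence of some measure from Favard. Your Sturm-type induction (signs of $R_{n+1}$ alternating on the zeros of $R_n$ via $R_{n+1}=-\frac{n^2}{4}R_{n-1}$ there) also makes simplicity and strict interlacing self-contained, with no appeal to Favard at all.
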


\begin{proof}
By (\ref{f69}) and  (\ref{f71}) we have
\begin{equation}
\label{f73}
S_n(x) =  n! \sum_{k=0}^n \frac{C_n^k }{k! 2^{n-k}} \prod_{j=1}^k \left(x- \frac{2j-1}{2}\right).
\end{equation}

To prove Theorem~\ref{th1.0} we need the following lemma.
\begin{Lemma}
\label{L1}
For all $n \geq 2$  the following recurrence relation holds
\begin{equation}
\label{f74}
S_n(x) =  x S_{n-1}(x) + \frac{(n-1)^2}{4} S_{n-2}(x).
\end{equation}
\end{Lemma}
\begin{proof}
We have
$$ x S_{n-1}(x) + \frac{(n-1)^2}{4} S_{n-2}(x) = x(n-1)! \sum_{k=0}^{n-1} \frac{C_{n-1}^k }{k! 2^{n-1-k}}
 \prod_{j=1}^k \left(x- \frac{2j-1}{2}\right) $$
$$  +\frac{(n-1)}{4} (n-1)!   \sum_{k=0}^{n-2} \frac{C_{n-2}^k }{k! 2^{n-2-k}}
 \prod_{j=1}^k \left(x- \frac{2j-1}{2}\right)  $$
$$=    (n-1)! \sum_{k=0}^{n-1} \frac{C_{n-1}^k }{k! 2^{n-1-k}}
 \prod_{j=1}^{k} \left(x- \frac{2j-1}{2}\right)\cdot \left(x- \frac{2k+1}{2}\right)  $$   
$$+  (n-1)! \sum_{k=0}^{n-1} \frac{C_{n-1}^k }{k! 2^{n-1-k}} 
 \prod_{j=1}^k \left(x- \frac{2j-1}{2}\right) \cdot \frac{2k+1}{2} $$
 $$ +\frac{(n-1)}{4} (n-1)!   \sum_{k=0}^{n-2} \frac{C_{n-2}^k }{k! 2^{n-2-k}}
 \prod_{j=1}^k \left(x- \frac{2j-1}{2}\right)  $$
 $$=    (n-1)! \sum_{k=0}^{n-1} \frac{C_{n-1}^k }{k! 2^{n-1-k}}
 \prod_{j=1}^{k+1} \left(x- \frac{2j-1}{2}\right) $$   
$$+  (n-1)! \sum_{k=0}^{n-1} \frac{C_{n-1}^k }{k! 2^{n-1-k}} \cdot \frac{2k+1}{2} 
 \prod_{j=1}^k \left(x- \frac{2j-1}{2}\right) $$
 $$ +\frac{(n-1)}{4} (n-1)!   \sum_{k=0}^{n-2} \frac{C_{n-2}^k }{k! 2^{n-2-k}}
 \prod_{j=1}^k \left(x- \frac{2j-1}{2}\right)  $$
$$ = \left(\frac{(n-1)!}{2^n} +\frac{(n-1)!(n-1)}{2^n}  \right)  $$
$$+  \sum_{k=1}^{n-1} 
\prod_{j=1}^k \left(x- \frac{2j-1}{2}\right) \cdot \left(\frac{(n-1)! C_{n-1}^{k-1}}{2^{n-k} (k-1)!} + 
\frac{(n-1)! C_{n-1}^{k}}{2^{n-k-1} k!}\cdot \frac{2k+1}{2} \right.$$
$$\left.  +\frac{(n-1)}{4} \cdot 
\frac{(n-1)! C_{n-2}^k}{2^{n-k-2} k!} \right)  + \prod_{j=1}^n \left(x- \frac{2j-1}{2}\right)\cdot 
\frac{(n-1)!}{2^0 (n-1)!}. $$
\begin{equation}
\label{f75}
=\frac{n!}{2^n}+ \sum_{k=1}^{n-1}  b_k
\prod_{j=1}^k \left(x- \frac{2j-1}{2}\right) +  \prod_{j=1}^n \left(x- \frac{2j-1}{2}\right),
\end{equation}
where
$$b_k = \frac{(n-1)! C_{n-1}^{k-1}}{2^{n-k} (k-1)!} + 
\frac{(n-1)! C_{n-1}^{k}}{2^{n-k-1} k!}\cdot \frac{2k+1}{2} 
 +\frac{(n-1)}{4} \cdot 
\frac{(n-1)! C_{n-2}^{k}}{2^{n-k-2} k!}     $$
$$= \frac{(n-1)!}{k! 2^{n-k}}\left(k C_{n-1}^{k-1} +C_{n-1}^{k}(2k+1) +
(n-1) C_{n-2}^{k} \right)       $$
$$ = \frac{(n-1)!}{k! 2^{n-k}}\left(\frac{k(n-1)!}{(k-1)!(n-k)!} +\frac{(2k+1)(n-1)!}{k!(n-1-k)!} 
+ \frac{(n-1)!}{k!(n-2-k)!}   \right)    $$
$$    = \frac{(n-1)!}{k! 2^{n-k}} \cdot \frac{(n-1)!}{k!(n-k)!} \cdot \left(k^2 +(2k+1)(n-k) +(n-1-k)(n-k)\right)$$
$$ = \frac{(n-1)!}{k! 2^{n-k}} \cdot \frac{(n-1)!}{k!(n-k)!} \cdot n^2  =n! \cdot \frac{C_n^k}{2^{n-k}k!}.   $$
Substituting into (\ref{f75}) we get
$$ x S_{n-1}(x) + \frac{(n-1)^2}{4} S_{n-2}(x) =  \frac{n!}{2^n}+ \sum_{k=1}^{n-1}  n! \cdot \frac{C_n^k}{2^{n-k}k!}
\prod_{j=1}^k \left(x- \frac{2j-1}{2}\right)$$
$$ +  n! \cdot \frac{C_n^n}{2^{n-n}n!}\prod_{j=1}^n \left(x- \frac{2j-1}{2}\right)  = S_n(x). $$
Lemma~\ref{L1} is proved.
\end{proof}

Let us use a change of variables to transition from the imaginary axis to the
real axis. We define a new sequence of polynomials
$$M_n(x) = i^{-n} S_n(ix).$$
Using (\ref{f72}) we get that $M_n$ is a real polinomial. From (\ref{f74}) we obtain
$$S_n(ix) =  ix S_{n-1}(ix) + \frac{(n-1)^2}{4} S_{n-2}(ix),   $$
or
\begin{equation}
\label{f76}
M_n(x) = x M_{n-1}(x) -\frac{(n-1)^2}{4} M_{n-2}(x).
\end{equation}
We also know that
$$M_0(x)=1,   M_1(x) =x.$$
Further we will use the famous Favard's theorem.

{\bf Theorem D} (J.~Favard, \cite[pp. 15–16]{RaSc}).  {\it     
Let $(K_n(x))_{n=0}^\infty \subset \mathbb{R}[x],  \deg K_n =n,$ be a sequence of
real polynomials satisfying a recurrence relation of the form
$$K_n(x)=(x-a_n) K_{n-1}(x) -c_n K_{n-2}(x),     $$
where $K_0 =1$ and $c_n >0$ for all $n.$   Then the sequence $(K_n(x))_{n=0}^\infty$ 
forms a sequence of orthogonal polynomials with respect to some positive measure on the 
real line.}

Using (\ref{f76}) and Favard's theorem we conclude that the sequence $(M_n(x))_{n=0}^\infty$ 
forms a sequence of orthogonal polynomials with respect to some positive measure on the 
real line. 

A fundamental property of orthogonal polynomial sequences is that all their roots are real,
simple and that zeros of consecutive polynomials interlace. Thus, all zeros of the polynomials $M_n$ 
are real and simple,  and zeros of consecutive polynomials interlace. Whence, all zeros of the 
polynomials $S_n$ are simple and purely imaginary,  and zeros of consecutive polynomials interlace. 
Equivalently, all zeros of the polynomials  $P_n$ are simple and have real parts equal 
$- \frac{1}{2},$  and zeros of consecutive polynomials interlace.

Theorem~\ref{th1.0} is proved.
\end{proof}

\end{Example}

\section{Open Problems}

In connection with the examples considered we formulate the following open problems.

\begin{Problem}
\label{pr4}
Let
\begin{equation}
\label{f13}
U_n = \{P\in \mathbb{R}[x]  :  \deg P =n\  \mbox{and} \   
(P(k))_{k=0}^\infty \in \mathrm{TP}  \}. 
\end{equation}

Denote by $r_n, n\in \mathbb{N},$ the following numbers
\begin{equation}
\label{f13}
r_n = \sup_{P\in U_n} (\min\{|z| \in \mathbb{C} : P(z)=0 \}).
\end{equation}
The problem is to describe (or estimate) the set of numbers 
$\{r_n\}_{n=1}^\infty.$ Obviously, $r_1=1.$ Example~\ref{ex2} 
shows that $r_2 \geq \frac{1+ \sqrt{2}}{2}>1.$ Example~\ref{ex4} and 
statement 3 of Theorem~\ref{th1} show that $r_{2m+1}=1, m \in \mathbb{N}.$ 
Our conjecture is that the sequence $\{r_n\}_{n=1}^\infty$ is bounded from above.
\end{Problem}

\begin{Problem}
\label{pr5}
Denote by $R_n, n\in \mathbb{N},$ the following numbers
\begin{equation}
\label{f13}
R_n = \sup_{P\in U_n} (\max\{|z| \in \mathbb{C} : P(z)=0 \}).
\end{equation}
The problem is to describe (or estimate) the set of numbers 
$\{R_n\}_{n=1}^\infty.$ Obviously, $R_1=r_1=1.$ Example~\ref{ex4} 
shows that $R_n \geq n.$ We note that item 1 of the Statement~\ref{st1} and 
formula~(\ref{fd2}) show that all real zeros of a real polynomial of degree 
$n$ that interpolated a $\mathrm{TP}$-sequence are situated in the segment $[-n, n-1].$   
Our conjecture is that $R_n = n.$ 
\end{Problem}

\begin{Problem}
\label{pr6}
Let $P = C(x-x_1)(x-x_2) \cdot \ldots \cdot (x-x_n)\in \mathbb{R}[x],$  
$x_1 \leq x_2 \leq \ldots \leq x_n,$  be a polynomial such that  $(P(k))_{k=0}^\infty \in \mathrm{TP}.$
Is it true that there exists a constant $d\geq 1$ (not depending on $n$) such that for every 
$j=1, 2, \ldots, n-1$ we have $x_{j+1} - x_j \leq d?$   
We note that for every polynomial $P$ satisfying the sufficient conditions of 
Theorem~B the statement is true with $d=1.$ Our conjecture is that $d<2.$
\end{Problem}

\end{document}